\numberwithin{equation}{section} 
\newcounter{mnote}
\theoremstyle{plain}
\newtheorem{theorem}{Theorem}[section]
\newtheorem{proposition}[theorem]{Proposition}
\theoremstyle{definition}
\theoremstyle{remark}
\newtheorem{remark}[theorem]{Remark}
\newcommand{\field}[1]{\mathbb{#1}}
\newcommand{\nN}{\field{N}}
\newcommand{\nR}{\field{R}}
\newcommand{\vphi}{\varphi}
\newcommand{\veps}{\varepsilon}
\def\Om{\Omega}
\newcommand{\pd}[2]{\frac{\partial #1}{\partial #2}}
\newcommand{\od}[2]{\frac{d #1}{d #2}}
\newcommand{\abs}[1]{\left\lvert#1\right\rvert}
\newcommand{\norm}[1]{\left\lVert#1\right\rVert}
\newcommand{\Lp}[1]{\text{${L}^{#1}$}(\Om)}
\newcommand{\Hp}[1]{\text{${H}^{#1}$}(\Om)}
\begin{document}
\title[Data Assimilation for Benard convection in porous media]{Data Assimilation algorithm for 3D B\'enard convection in porous media employing only temperature  measurements}

\date{October 05, 2015}

\author{Aseel Farhat}
\address[Aseel Farhat]{Department of Mathematics\\
              University of Virginia\\
       Charlottesville, VA 22904, USA}
\email[Aseel Farhat]{af7py@virginia.edu}
\author{Evelyn Lunasin}
\address[Evelyn Lunasin]{Department of Mathematics\\
                United States Naval Academy\\
        Annapolis, MD, 21401 USA}
\email[Evelyn Lunasin]{lunasin@usna.edu}

\author{Edriss S. Titi}
\address[Edriss S. Titi]{Department of Mathematics, Texas A\&M University, 3368 TAMU,
 College Station, TX 77843-3368, USA.  {\bf ALSO},
  Department of Computer Science and Applied Mathematics, Weizmann Institute
  of Science, Rehovot 76100, Israel.} \email{titi@math.tamu.edu and
  edriss.titi@weizmann.ac.il}

\begin{abstract}
In this paper we propose a continuous data assimilation (downscaling) algorithm for the B\'enard convection in porous media using only coarse mesh measurements of the temperature. In this algorithm, we incorporate the observables as a feedback (nudging) term in the evolution equation of the temperature. We show that under an appropriate choice of the nudging parameter and the size of the mesh, and under the assumption that the observed data is error free, the solution of the proposed algorithm converges at an exponential rate, asymptotically in time, to the unique exact unknown reference solution of the original system, associated with the observed (finite dimensional projection of) temperature data.
Moreover, we note that in the case where the observational measurements are not error free, one can estimate the error between the solution of the algorithm and the exact reference solution of the system in terms of the error in the measurements.
\end{abstract}

 \maketitle
 \noindent
 {\bf MSC Subject Classifications:} 35Q30, 37C50, 76B75, 93C20.\\
 \noindent
{\bf Keywords:} B\'enard convection, porous media, continuous data assimilation, signal synchronization, nudging, downscaling.\\

\section{Introduction}\label{intro}
Linking mathematical and computational models to a set of a coarse mesh observed data is crucial to obtain a more systematic representation of the state of a dynamical system in many physical applications. For most applications, observed data which
should ideally be defined on the whole physical domain, can be measured only discretely, often with inadequate resolution and contaminated by errors.
The problem of constructing approximate solution of dynamic systems by fusing incomplete observed data with a mathematical model has been the focus of many researchers in the past decades.   For example, designing feedback control algorithms for {\it linear time-invariant} systems has been the focus of Luenberger and many others since the early 60's and 70's (see e.g., \cite{Leunberger1971} and references therein). The problem of constructing approximate solution for {\it classes of non-linear-nth-order} systems was later studied by Thau in \cite{Thau1973}. A recent review by Nijmeijer \cite{Nijmeijer2001} addressed the problem of the synchronization from a non-linear control theory prospective.  Analyzing the validity and success of a data assimilation algorithm when some state variable observations are not available as an input on the numerical forecast model, in the context of meteorology and atmospheric physics, was also studied in \cite{Charney1969, Ghil1977, Hoke-Anthes1976, Ghil-Halem-Atlas1978}.

The classical method that was proposed, see, e.g., \cite{Charney1969} and \cite{Daley}, is
to insert observational measurements directly into a model as the latter is being integrated in time. One way to exploit this is to insert low Fourier mode observables into the equation for the evolution of the high Fourier modes. This was the approach taken for the 2D Navier-Stokes (NSE) in \cite{Browning_H_K, Henshaw_Kreiss_Ystrom, Olson_Titi_2003, Olson_Titi_2008, Korn}. The same approach was also taken by the authors in \cite{Hayden_Olson_Titi} to present a {\it discrete-in-time} data assimilation algorithm for the Lorentz and the 2D NSE systems. The authors in \cite{Hayden_Olson_Titi} and \cite{Olson_Titi_2003} were the first to study the data assimilation problem from a PDE prospective following ideas from the theory of determining modes for infinite-dimensional dynamical systems. The previously mentioned theoretical work assumed that the observational measurements are error free. We refer to some concurrent related works that allow for noisy observations in \cite{B_L_Stuart, Law-etal2014} which were motivated by the earlier analytical studies in \cite{Hayden_Olson_Titi} and \cite{Olson_Titi_2003}.

In \cite{Azouani_Titi}, the authors introduced a simple finite-dimensional feedback control algorithm for stabilizing solutions of infinite-dimensional dissipative evolution equations, such as reaction-diffusion systems, the Navier-Stokes equations and the Kuramoto-Sivashinsky equation. A computational study of this simple finite~-~dimensional feedback control algorithm was presented in \cite{Lunasin_Titi}. Based on this control algorithm, a new continuous data assimilation algorithm was developed in \cite{Azouani_Olson_Titi}.
In this new algorithm the coarse mesh observational measurements are incorporated into the equations in the form of a linear feedback control term.  That is, the  algorithm was built by adding to the model equations a term that nudges the solution towards the observations.  In the 1970s (see e.g., \cite{Hoke-Anthes1976} and references therein) this technique was called Newtonian nudging or dynamic relaxation method,  analyzed typically in much simpler scenarios. The advantage is that, since no derivatives are required
of the coarse grain observables, this works for a general class of interpolant operators.  The algorithm introduced in \cite{Azouani_Olson_Titi} was designed to work for general dissipative dynamical systems.

 The main idea
can be outlined in terms of a general dissipative evolutionary equation
\begin{align}\label{ev_eq}
\od{u}{t} = F(u),
\end{align}
where the initial data $u(0)= u^0$ is missing. The algorithm is of the form
\begin{subequations}\label{du}
\begin{align}
&\od{v}{t} = F(v) - \mu (I_h(v)- I_h(u)), \\
&v(0)= v^0,
\end{align}
\end{subequations}
where $\mu>0$ is a relaxation (nudging) parameter and $v^0$ is taken to be arbitrary initial data. $I_h(\cdot)$ represents an interpolant operator based on the observational measurements of a system at a coarse spatial resolution of size $h$, for $t\in [ 0,T ]$. Notice that if system \eqref{du} is globally well-posed and $I_h(v)$ converge to $I_h(u)$ in time, then we recover the reference $u(t,x)$ from the approximate solution $v(t,x)$. The main task is to find estimates on $\mu>0$ and $h>0$ such that the approximate solution $v(t)$ is with increasing accuracy to the reference solution $u(t)$ as more continuous data in time is supplied.


It is worth noting that the continuous data assimilation in the context of the incompressible 2D NSE studied in \cite{Azouani_Olson_Titi} under the assumption that the data is noise free. The case when the observational data contains stochastic noise is treated in \cite{Bessaih-Olson-Titi}.
The authors in \cite{Bessaih-Olson-Titi} established resolution conditions which guarantee that the
limit supremum, as the time tends to infinity, of the expected value
of the $L^2$-norm of the difference between the approximating solution
and the actual reference solution, (i.e. the error), is bounded by an estimate involving the variance
of the noise in the measurements and the spatial resolution of the collected data, $h$.


It is also worthwhile to mention that the authors in \cite{MTT2015} analyzed an algorithm for continuous data assimilation for 3D Brinkman-Forchheimer-extended Darcy (3D BFeD) model of a porous medium, a model equation when the velocity is too large for classical Darcy's law to be valid. A similar algorithm for stochastically noisy data is at hand combining ideas from the work in \cite{Bessaih-Olson-Titi} and \cite{FLT2015}. Furthermore, the proposed data assimilation algorithm can also be applied to several three-dimensional subgrid scale turbulence models. In \cite{ALT2014}, it was shown that approximate solutions constructed using observations on all three components of the unfiltered velocity field converge in time to the reference solution of the 3D NS-$\alpha$ model. An abridged data assimilation algorithm for the 3D Leray-$\alpha$ model, using observations in {\it only any two components and without any measurements on the third component of the velocity field}, is analyzed in \cite{FLT2015_alpha}.

A particular application that we have in mind in this paper is flows in porous media, subjected to heating from below and cooling from the top. Flows in porous media are connected to many important problems in geophysics (see, e.g., \cite{Straughan} and references therein) as well as in biological systems and biotechnology (see, e.g., \cite{Vafai}).

We consider $\Omega = [0,L]\times [0,l]\times [0,1]$ to be a box in $\nR^3$ of porous media saturated with a fluid. The side walls of the box are insulated, and the box is heated from below with constant temperature $T_0$ and cooled from above with constant temperature $T_1<T_0$. After some change of variables and proper scaling, (see, e.g. \cite{Doering, Straughan}), the governing non-dimensional equations of the convecting fluid through a porous medium are: 
\begin{subequations}\label{3D_Benard_Porous}
\begin{align}
\gamma \pd{u}{t} + u + \nabla p &= Ra\,\theta \hat{k}, \label{3D_Benard_Porous_u}\\
\pd{\theta}{t} - \Delta \theta + (u\cdot\nabla)\theta -u\cdot \hat{k}&= 0, \label{3D_Benard_Porous_theta}\\
\nabla\cdot u&=0, \label{div_free}\\
u(0;x,y,z) = u^0(x,y,z), \quad \theta(0;x,y,z) &= \theta^0(x,y,z),
\end{align}
subject to the boundary conditions:
\begin{align}
\theta(t; x,y,0) = \theta(t; x,y,1) & =0, \label{bd_2}\\
\pd{\theta}{x}(t; 0,y,z) = \pd{\theta}{x}(t; L,y,z) = \pd{\theta}{y}(t; x,0,z) = \pd{\theta}{y}(t; x,l,z) &=0, \label{bd_3}
\end{align}
and
\begin{align}
u\cdot\hat{n} &=0, \quad \text{on }\partial\Omega. \label{bd_1}
\end{align}
\end{subequations}
Here, $u(t; x, y, z)=(u_1(t;x,y,z), u_2(t;x,y,z), u_3(t;x,y,z))$ is the fluid velocity, $p=p(t;x,y,z)$ is the pressure. $\theta=\theta(t;x,y,z)$ is the scaled fluctuation of the temperature around the steady state background temperature profile $(T_1-T_0)z+T_0$  and it is given by $\theta = T- (\frac{T_0}{T_0-T_1}-z)$, where $T=T(t;x,y,z)$ is the temperature of the fluid inside the box $\Omega$, $T_0$ is the temperature of the fluid at the bottom and $T_1$ is the temperature of the fluid at the top. The non-dimensional parameter $\gamma$ is equal to $\frac{1}{Pr}$, where $Pr$ is the Darcy-Prandtl number representing a measure of the ratio of the viscosity to the thermal diffusion coefficient. The non-dimensional parameter $Ra$ is the Rayleigh-Darcy number, which is a measure of the ratio of the driving force (coming from the imposed average temperature gradient), to the viscosity and thermal diffusion in the system. The vector $\hat{k}$ is the unit vector in the $z$-direction and $\hat{n}=\hat{n}(x,y,z)$ is the normal vector to the boundary $\partial \Omega$ at the point $(x,y,z)$. We remark that the temperature fluctuation $\theta(t;x,y,z)$ satisfies an advection-diffusion equation \eqref{3D_Benard_Porous_theta}. Also, the evolution of the fluid velocity $u(t;x,y,z)$ and pressure $p(t;x,y,,z)$ is described by Darcy's law \eqref{3D_Benard_Porous_u}, this replaces the conservation of momentum equations in the Navier-Stokes equations.

The B\'enard convection in porous media problem \eqref{3D_Benard_Porous} with $\gamma >0$ (namely, the Darcy-Prandtl number $Pr$ is finite), was studied in \cite{Fabrie_1} and \cite{Fabrie_2}. There, the existence and uniqueness of global weak and strong solutions were established.   It was also shown that the temperature $\theta(t; x,y,z)$ satisfies the maximum principle and has an absorbing ball in $L^p(\Omega)$ for each finite $p$. The authors of \cite{Fabrie_Nicolaenko} showed that for initial data $(u^0, \theta^0) \in L^2(\Omega)\times L^\infty(\Omega)$, the system has a finite-dimensional global attractor $\mathcal{A}\subset L^2(\Omega)\times L^\infty(\Omega)$ which attracts in the $L^2(\Omega)\times L^2(\Omega)$ metric. Moreover, they showed that the system has exponential attractors. The Gevrey regularity (spatial analyticity) of the solutions on the global attractor was later studied in \cite{Oliver_Titi}. There, a rigorous lower bound estimate on the radius of analyticity was obtained.

When $\gamma =0$ (namely, the Darcy-Prandtl number $Pr$ is infinite), system \eqref{3D_Benard_Porous} was studied in \cite{Ly_Titi}. All the constraints on the boundary are similar to the nonzero Darcy-Prandtl number case, while we note that in the $\gamma =0$ case, there is no need to specify the initial data $u^0(x,y,z)$.  One can recover $u^0(x,y,z)$ by solving equation \eqref{3D_Benard_Porous_u} given $\theta^0(x,y,z)$. This is unlike the non-zero Darcy-Prandtl case where one has to specify both initial data $u^0(x,y,z)$ and $\theta^0(x,y,z)$. It was shown in \cite{Ly_Titi} that the system for the $\gamma =0$ case has global real analytic solutions and admits a real analytic global attractor. Moreover, it was also shown that the standard Galerkin solution of the system, when $\gamma=0$, converges exponentially fast, in the wave number, to the exact solution of the system.
This in turn justifies the computational results reported in \cite{G_S_T}.

We propose a  continuous data assimilation algorithm for the 3D B\'enard convection in porous media employing coarse mesh measurements of only the temperature field. We incorporate the observables as a feedback term in the original evolution equation for the system. This algorithm can be implemented with a variety of finitely many observables: for instance low Fourier modes, nodal values, finite volume averages, or finite elements. Our algorithm is given by the following system:
\begin{subequations}\label{DA_3D_Benard_Porous}
\begin{align}
\gamma \pd{v}{t} + v + \nabla q &= Ra\,\eta \hat{k}, \label{DA_3D_Benard_Porous_v}\\
\pd{\eta}{t} - \Delta \eta + (v\cdot\nabla)\eta - v\cdot \hat{k}  &= -\mu(I_h(\eta)-I_h(\theta)), \label{DA_3D_Benard_Porous_eta}\\
\nabla\cdot v&=0, \label{DA_div_free}\\
v(0;x,y,z) = v^0(x,y,z), \quad \eta(0;x,y,z) &= \eta^0(x,y,z),
\end{align}
subject to the boundary conditions:
\begin{align}
\eta(t; x,y,0) = \eta(t; x,y,1) & =0, \label{DA_bd_2}\\
\pd{\eta}{x}(t; 0,y,z) = \pd{\eta}{x}(t; L,y,z) = \pd{\eta}{y}(t; x,0,z) = \pd{\eta}{y}(t; x,l,z) &=0, \label{DA_bd_3}
\end{align}
and
\begin{align}
v\cdot\hat{n} &=0, \quad \text{on } \partial\Omega.\label{DA_bd_1}
\end{align}
\end{subequations}
Here, $q$ is a modified pressure, and we note that the initial conditions $v_0$ and $\eta_0$, may chosen arbitrarily, (for e.g., both initial conditions are set to zero). In this paper, we will only consider interpolant observables that are given by a linear interpolant operator $I_h: \Hp{1} \rightarrow \Lp{2}$ satisfying
the approximation property
\begin{align}\label{app}
\norm{\varphi - I_h(\varphi)}_{\Lp{2}} \leq c_0h\norm{\varphi}_{\Hp{1}},
\end{align}
for every $\varphi \in \Hp{1}$, where $c_0>0$ is a dimensionless constant.  One example of an interpolant observable that satisfies \eqref{app} is the orthogonal projection onto the low Fourier modes with wave numbers less than $1/h$. A more physical example are the volume elements that were studied in \cite{Jones_Titi}. The algorithm and our analysis in this paper will carry on to the second type of interpolants treated in \cite{Azouani_Olson_Titi}. This second type is given by a linear operator $I_h: \Hp{2}\rightarrow\Lp{2}$, together with
\begin{align}\label{app2}
\norm{\varphi - I_h(\varphi)}_{\Lp{2}} \leq c_1h\norm{\varphi}_{\Hp{1}} + c_2h^2\norm{\varphi}_{\Hp{2}},
\end{align}
for every $\varphi \in \Hp{2}$, where $c_1, c_2>0$ are dimensionless constants. An example of this type of interpolant observables that satisfies \eqref{app2} is given by the measurements at a discrete set of nodal points in $\Omega$. We are not treating the second type of interpolants in this paper only for the simplicity of presentation. For full details on the analysis for the second type of interpolants we refer to \cite{FJT} and \cite{FLT2015}.


Our current analytical investigation is motivated by  some recent studies in \cite{FJT} and \cite{FLT2015}. In \cite{FJT},  a continuous data assimilation scheme for the two-dimensional incompressible B\'enard convection problem was introduced. The data assimilation algorithm in \cite{FJT} constructs the approximate solutions for the velocity $u$ and temperature fluctuations $\theta$ using only the observational data, $I_h(u)$, of the velocity field and without any measurements for the temperature (or density) fluctuations. Inspired by the recent algorithms proposed in \cite{Azouani_Olson_Titi, FJT}, we introduced an abridged dynamic continuous data assimilation for the 2D Navier-Stokes equations (NSE) in \cite{FLT2015}. The proposed algorithm in \cite{Azouani_Olson_Titi} for the 2D NSE requires measurements for the two components of the velocity vector field. On the other hand, in \cite{FLT2015}, we establish convergence results for an improved algorithm where the observational data needed to be measured and inserted into the model equation is reduced or subsampled. Our algorithm there requires observational measurements of only one component of the velocity vector field.
Ideally, one would like to design an algorithm, for the B\'enard convection model, based on temperature measurements only, but the analysis to support or dispute the existence of such an algorithm is a subject of ongoing research. In fact, the authors of this paper believe that knowing the temperature trajectory does not, in general, determine the velocity vector field in the 2D B\'enard convection model (see \cite{Altaf} for a computational study supporting this claim). It is worthwhile to mention that earlier conjecture of Charney {\it et. al.} \cite{Charney1969}, that complete temperature history of the atmosphere for certain simple atmospheric models, will determine other state variables, was tested in \cite{Ghil1977, Ghil-Halem-Atlas1978} (see also \cite{Altaf}). It was noted in \cite{Ghil-Halem-Atlas1978} that in practice, the complexities in weather forecast models together with numerous problems in the real data make it uncertain that assimilation of temperature data alone will yield initial states of arbitrary accuracy. The authors in \cite{Ghil-Halem-Atlas1978} developed and tested different time-continuous data assimilation methods using temperature data. Their results suggested that temperature data can have an impact on the numerical weather forecasts that is sensitive to the quality of such observed data and is highly affected by the assimilation method used. The authors concluded that designing more refined methods for data assimilation and developing better numerical models can lead a major improvement in numerical weather prediction using temperature measurements alone.

Our main contribution in the current study is to give a rigorous justification that a data assimilation algorithm based on temperature measurements alone can be designed for the 3D B\'enard convection in porous media. We provide explicit estimates on the relaxation (nudging) parameter $\mu$ and the spatial resolution $h$ of the observational measurements, in terms of physical parameters, that are needed in order for the proposed downscaling algorithm to recover the reference solution under the assumption that the supplied data are error free. In the case where the observational measurements are not error free, one can estimate the error between the solution of the algorithm and the exact reference solution of the system in terms of the error in the measurements.
While the typical scenario in data assimilation is to choose $\mu$ depending on $h$, in our convergence analysis we choose our parameters $\mu$ and $h$ to depend on physical parameters. More explicitly, we choose $\mu$ to depend on the bounds of the solution on the global attractor of the system and then choose $h$ to depend on $\mu$. The philosophy here is that in order to prove the convergence theorems, we need to have a complete resolution of the flow, so $h$ has to depend on the physical parameters, such as the Rayleigh number ($Ra$) in this case.

Numerical simulations in \cite{Gesho} and \cite{Gesho_Olson_Titi} (see also \cite{Hayden_Olson_Titi}) have shown that, in the absence
of measurements errors, the continuous data assimilation algorithm for the 2D Navier-Stokes equations performs much better than analytical estimates in \cite{Azouani_Olson_Titi} would suggest. This was also noted in a different context in \cite{Olson_Titi_2003} and \cite{Olson_Titi_2008}. It is likely that the data assimilation algorithm studied in this paper will also perform much better than suggested by the analytical results, i.e. under more relaxed conditions than those assumed in the rigorous estimates. This is a subject of future work.

The outline of the paper is as follows. In section \ref{pre}  we give some preliminaries. In section \ref{zero}, we present the convergence analysis of our proposed data assimilation algorithm in the case of zero Darcy-Prandtl case and the nonzero Darcy-Prandtl case in section \ref{nonzero}.

\bigskip

\section{Preliminaries}\label{pre}

For the sake of completeness, this section presents some preliminary material and notation commonly used in the mathematical study of hydrodynamics models, in particular in the study of the Navier-Stokes equations (NSE) and the Euler equations. For more detailed discussion on these topics, we refer the reader to, e.g., \cite{Constantin_Foias_1988}, \cite{Robinson}, \cite{Temam_1995_Fun_Anal} and \cite{Temam_2001_Th_Num}.

Let $\Lp{p}:=L^p:\Omega\rightarrow \nR$ and $\Hp{k}:= H^k:\Omega\rightarrow \nR$ be denote the usual $L^p$ Lebesgue space and $H^k$-Sobolev space, respectively, for $1\leq p\leq \infty$ and $k\in\nR$. We define the spaces:
\begin{align*}
\mathcal{V} &:= \{ u \in (C^\infty(\Omega))^3: \, u\cdot\hat{n} =0 \text{ on } \partial\Omega \text{ and } \nabla\cdot u =0 \text{ in } \Omega\},\\
\tilde{\mathcal{V}} &:= \{\theta \in C^\infty(\Omega): \, \theta \text{ satisfies the boundary conditions } \eqref{bd_2} \text{ and }\eqref{bd_3}\},\\
{\bf H} &:= \text{the closure of } \mathcal{V} \text{ in the } (\Lp{2})^3 \text{ norm}, \\
{\bf V} &:= \text{the closure of } \mathcal{V} \text{ in the } (\Hp{1})^3 \text{ norm}, \\
H &:= \text{the closure of } \tilde{\mathcal{V}} \text{ in the } \Lp{2} \text{ norm}, \\
V &:= \text{the closure of } \tilde{\mathcal{V}} \text{ in the } \Hp{1} \text{ norm}.
\end{align*}

We define the inner products on ${\bf H}$ and $H$ by
\[(u,w)=\sum_{i=1}^3\int_{\Om} u_iw_i\,dxdydz, \quad \text{and} \quad (\theta,\eta) = \int_{\Om} \theta\eta\,dxdydz,
\]
respectively.
We also define the inner product on ${\bf V}$ and $V$ by
\[((u,w))=\sum_{i,j=1}^3\int_{\Om}\partial_ju_i\partial_jw_i\,dxdydz, \quad \text{and} \quad ((\theta,\eta))=\sum_{j=1}^3\int_{\Om}\partial_j\theta\partial_j\eta\,dxdydz,
\]
respectively.
Note that, thanks to the boundary conditions \eqref{bd_2} and \eqref{bd_3}, $\norm{\cdot}_{V}=((\cdot,\cdot))^{1/2} = \norm{\nabla \cdot}_{\Lp{2}}$ is a norm on $V$ due to the Poincar\'e inequality \eqref{poincare_1}, below.

We define the Helmholtz-Leray projector $P_\sigma$ as the orthogonal projection from $(\Lp{2})^3$ onto ${\bf H}$ and define $A = -\Delta$ subject to the boundary condition \eqref{bd_1} and \eqref{bd_3} with the domain
$$\mathcal{D}(A)= \{\theta\in \Hp{2}: \theta \text{ satisfies }\eqref{bd_2} \text{ and }\eqref{bd_3}\}.$$ Using the Lax-Milgram Theorem and the elliptic regularity in the box $\Omega$, the linear operator $A$ is self-adjoint and positive definite with compact inverse $A^{-1}:~ H~\rightarrow~ \mathcal{D}(A)$. Thus, there exists a complete orthonormal set of eigenfunctions $w_i$ in $H$ such that $Aw_i= \lambda_iw_i$ where $0<\lambda_i\leq\lambda_{i+1}$ for $i\in \nN$.

We define the bilinear from $\mathcal{B}(\cdot,\cdot): {\bf V}\times \mathcal{D}(A)\rightarrow H$, such that
$$\mathcal{B}(u, \theta) := (u\cdot\nabla) \theta, $$
for every $u\in {\bf V}$ and $\theta \in \mathcal{D}(A)$. Using the boundary conditions \eqref{bd_2} and \eqref{bd_3}, one can easily check that
\begin{align}\label{zero_nonlinearity}
(\mathcal{B}(u, \theta), \theta) =0,
\end{align}
for every $u\in {\bf V}$ and $\theta \in \mathcal{D}(A)$.

We recall the Poincar\'e inequality:
\begin{align}\label{poincare_1}
\|\vphi\|_{\Lp{2}}^2\leq \lambda_1^{-1}\|A^{1/2}\vphi\|_{\Lp{2}}^2, \quad \text{ for all } \vphi \in V,
\end{align}
where $\lambda_1$ is the smallest eigenvalue of the operator $A$.

Let $Y$ be a Banach space. We denote by $L^p([0,T];Y)$ the space of (Bochner) measurable functions $t\mapsto w(t)$, where $w(t)\in Y$, for a.e. $t\in[0,T]$, such that the integral $\int_0^T\|w(t)\|_Y^p\,dt$ is finite.

Furthermore, inequality \eqref{app} implies that
\begin{align}\label{app_F}
 \norm{\theta-I_h(\theta)}_{\Lp{2}}\leq c_0h\norm{A^{1/2}\theta}_{\Lp{2}},
\end{align}
for every $\theta\in V$.

We recall the following existence and uniqueness results for the 3D B\'enard convection problem in porous media \eqref{3D_Benard_Porous}. Hereafter, $c$ will denote a universal dimensionless positive constant that may change from line to line.
\begin{proposition}\cite{Ly_Titi}
Given $\theta \in V$, there exists a unique solution $u\in {\bf V}$ of the problem \eqref{3D_Benard_Porous_u} and \eqref{div_free}, with $\gamma =0$, subject to the boundary condition \eqref{bd_1}. Moreover, $u$ satisfies
\begin{align}\label{u_to_theta_0}
\norm{u}_{{\bf V}} \leq c Ra \norm{\theta}_{V}.
\end{align}
\end{proposition}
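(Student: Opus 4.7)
The plan is to exploit the fact that, when $\gamma=0$, equation \eqref{3D_Benard_Porous_u} degenerates into the stationary Darcy-type relation
\begin{equation*}
u + \nabla p = Ra\,\theta\hat{k},
\end{equation*}
which, together with \eqref{div_free} and the boundary condition \eqref{bd_1}, simply says that $u$ is the divergence-free, tangential component of the vector field $Ra\,\theta\hat{k}$. The natural move is therefore to apply the Helmholtz-Leray projector $P_\sigma$, which annihilates gradients, to obtain the explicit representation
\begin{equation*}
u = Ra\, P_\sigma(\theta\hat{k}).
\end{equation*}
Existence is then immediate since $\theta\hat{k}\in (L^2(\Omega))^3$, and uniqueness follows because any two solutions $u_1,u_2$ would give $u_1-u_2 = -\nabla(p_1-p_2)$ with $\nabla\cdot(u_1-u_2)=0$ and $(u_1-u_2)\cdot\hat{n}=0$, forcing $u_1=u_2$ by an $L^2$ test against the difference.

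For the $L^2$ bound, I would take the $(\cdot,\cdot)$-inner product of $u+\nabla p = Ra\,\theta\hat{k}$ with $u$ itself: the pressure term drops after integration by parts thanks to $\nabla\cdot u=0$ and $u\cdot\hat{n}=0$, so Cauchy-Schwarz gives
\begin{equation*}
\|u\|_{\Lp{2}}^2 \leq Ra\,\|\theta\|_{\Lp{2}}\|u\|_{\Lp{2}},
\end{equation*}
hence $\|u\|_{\Lp{2}}\leq Ra\,\|\theta\|_{\Lp{2}}\leq c\,Ra\,\|\theta\|_V$ by Poincar\'e.

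The substantive step is the gradient estimate. I would take the curl of the Darcy relation to eliminate the pressure,
\begin{equation*}
\nabla\times u = Ra\,\nabla\times(\theta\hat{k}) = Ra\,(\partial_y\theta,\,-\partial_x\theta,\,0),
\end{equation*}
so $\|\nabla\times u\|_{\Lp{2}}\leq Ra\,\|\theta\|_V$. Combining this with $\nabla\cdot u=0$ and the slip condition $u\cdot\hat{n}=0$ on $\partial\Omega$, a Gaffney-type inequality yields $\|u\|_{\bf V} \leq c(\|\nabla\times u\|_{\Lp{2}} + \|\nabla\cdot u\|_{\Lp{2}}) \leq c\,Ra\,\|\theta\|_V$.

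The main technical obstacle is justifying the Gaffney-type inequality, since one has to show that the boundary integrals that appear in integrating by parts the identity $|\nabla u|^2 = |\nabla\times u|^2 + (\nabla\cdot u)^2 + \mathrm{b.t.}$ are controllable (or vanish). In our setting this is not a deep point: the box geometry of $\Omega$ and the fact that exactly one component of $u$ vanishes on each face collapse the relevant surface integrals face by face, so the argument reduces to bookkeeping rather than any subtle analysis. An equivalent route, which I would keep in reserve as a sanity check, is to derive the Neumann problem $\Delta p = Ra\,\partial_z\theta$ with $\partial p/\partial\hat{n}=Ra\,\theta\,\hat{k}\cdot\hat{n}$ (the right-hand side in fact vanishing on the top and bottom because of \eqref{bd_2}), apply elliptic regularity to obtain $\|p\|_{\Hp{2}}\leq c\,Ra\,\|\theta\|_V$, and then bound $u = Ra\,\theta\hat{k}-\nabla p$ directly in $(\Hp{1})^3$.
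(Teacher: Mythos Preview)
The paper does not supply its own proof of this proposition: it is quoted from \cite{Ly_Titi} and stated without argument. There is therefore nothing in the present paper to compare your attempt against.

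That said, your proposal is a correct reconstruction of the standard argument. The representation $u = Ra\,P_\sigma(\theta\hat{k})$ and the uniqueness step are exactly how one reads the $\gamma=0$ Darcy law, and the $L^2$ bound is immediate. For the $\mathbf{V}$-estimate, both routes you sketch work. On the rectangular box $\Omega$ with flat faces and the boundary condition $u\cdot\hat{n}=0$, the curvature terms in the Gaffney identity vanish face by face, so one indeed gets the clean equality $\|\nabla u\|_{\Lp{2}}^2 = \|\nabla\times u\|_{\Lp{2}}^2 + \|\nabla\cdot u\|_{\Lp{2}}^2$ (modulo the bookkeeping at edges and corners, which is handled by density); combined with your curl computation this gives the bound. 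Your backup route through the Neumann problem for $p$ is in fact the one taken in \cite{Ly_Titi}: taking the divergence of the Darcy relation yields $\Delta p = Ra\,\partial_z\theta$ with $\partial p/\partial\hat{n}=0$ on all of $\partial\Omega$ (the top/bottom contribution vanishing because of \eqref{bd_2}, the lateral one because $\hat{k}\cdot\hat{n}=0$ there), and elliptic regularity on the box then delivers $\|p\|_{\Hp{2}}\leq cRa\|\theta\|_V$, from which $u = Ra\,\theta\hat{k}-\nabla p$ is estimated directly in $(\Hp{1})^3$.
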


\begin{theorem}\cite{Ly_Titi}\label{unif_bounds_0}
Let $\theta^0\in V$, then system \eqref{3D_Benard_Porous}, with $\gamma=0$, has a unique global strong solution $(u,\theta)$ that satisfies:
\begin{align*}
u \in C([0,T];{\bf V})\cap L^2([0,T]; (\Hp{2})^3),\quad \text{and}\quad \od{u}{t} \in L^2([0,T];{\bf H}),\\
\theta \in C([0,T];V)\cap L^2([0,T];\mathcal{D}(A)),\quad \text{and}\quad \od{\theta}{t} \in L^2([0,T];H),
\end{align*}
for any $T>0$, and depends continuously on the initial data $\theta^0$. Moreover, the solution satisfies the following bound:
\begin{align}
\limsup_{t\rightarrow \infty} \norm{\theta(t)}_{\Lp{\infty}} \leq 1, \label{L_infty_0}
\end{align}
and the system admits a compact finite-dimensional global attractor $\mathcal{A}\subset{\bf V}\times V$.
\end{theorem}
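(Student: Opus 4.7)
The plan is to reduce the coupled system to a single scalar evolution equation and to exploit the natural $L^\infty$ maximum principle for the physical temperature in order to close the 3D energy estimates.

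\textbf{Reduction and Galerkin scheme.} Since $\gamma=0$, equation \eqref{3D_Benard_Porous_u} is an instantaneous elliptic constraint: applying the Helmholtz--Leray projector $P_\sigma$ gives $u=Ra\,P_\sigma(\theta\hat k)$, a bounded linear function of $\theta$ with $\norm{u}_{{\bf V}}\le cRa\norm{\theta}_V$ by the preceding proposition, and with $\norm{u}_{\Lp{p}}\le cRa\norm{\theta}_{\Lp{p}}$ for $1<p<\infty$ by the boundedness of $P_\sigma$. The whole system therefore reduces to the closed nonlinear equation \eqref{3D_Benard_Porous_theta} for $\theta$ alone. I would build approximate solutions $\theta_n$ by a Faedo--Galerkin scheme using the eigenbasis $\{w_i\}$ of $A$; local ODE theory gives existence on a maximal interval, and the task is to derive a priori estimates that preclude blow-up.

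\textbf{Maximum principle and $L^\infty$ bound.} The key observation is that the auxiliary variable $\tilde T:=\theta+z$ satisfies the pure advection--diffusion equation $\pd{\tilde T}{t}-\Delta\tilde T+(u\cdot\nabla)\tilde T=0$, because the source term $u\cdot\hat k=u\cdot\nabla z$ in \eqref{3D_Benard_Porous_theta} is exactly what is needed so that the $-u\cdot\hat k$ term disappears in the equation for $\tilde T$; moreover $\tilde T=0$ on $\{z=0\}$, $\tilde T=1$ on $\{z=1\}$, and homogeneous Neumann conditions hold on the side walls. A Stampacchia truncation argument, using $\nabla\cdot u=0$, then gives $0\le\tilde T\le 1$ at the Galerkin level, and hence $\norm{\theta}_{\Lp{\infty}}\le 1$, first for $\Lp{\infty}$ initial data and then asymptotically for $V$-valued initial data by parabolic smoothing. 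This yields \eqref{L_infty_0} and, in particular, a uniform asymptotic bound $M$ on $\norm{\theta}_{\Lp{\infty}}$ which via the representation of $u$ gives uniform bounds on $\norm{u}_{\Lp{p}}$ for every $p<\infty$.

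\textbf{$H$- and $V$-energy estimates.} Pairing the $\theta$-equation with $\theta$ and using \eqref{zero_nonlinearity} produces a standard absorbing-ball estimate in $H$. The delicate step is pairing with $A\theta$: using the 3D Gagliardo--Nirenberg inequality $\norm{\nabla\theta}_{\Lp{3}}\le c\norm{\nabla\theta}_{\Lp{2}}^{1/2}\norm{A\theta}_{\Lp{2}}^{1/2}$, one estimates
\begin{equation*}
\abs{((u\cdot\nabla)\theta,A\theta)}\le \norm{u}_{\Lp{6}}\norm{\nabla\theta}_{\Lp{3}}\norm{A\theta}_{\Lp{2}}\le cRa\,M\,\norm{\nabla\theta}^{1/2}\norm{A\theta}^{3/2}\le \tfrac14\norm{A\theta}^2+cRa^4M^4\norm{\nabla\theta}^2,
\end{equation*}
where crucially the $\Lp{6}$ norm of $u$ is controlled by the $L^\infty$ bound of $\theta$ from Step 2 rather than by its $V$-norm. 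The source term $(u\cdot\hat k,A\theta)$ is absorbed similarly. Grönwall then yields uniform bounds on $\theta_n$ in $L^\infty([0,T];V)\cap L^2([0,T];\mathcal D(A))$, and an absorbing ball in $V$; the equation gives $\od{\theta_n}{t}\in L^2([0,T];H)$, and $u_n$ inherits the analogous bounds via the linear representation.

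\textbf{Passage to the limit, uniqueness, and attractor.} Aubin--Lions compactness provides strong convergence of $\theta_n$ in $L^2([0,T];V)$, which suffices for passing to the limit in the quadratic transport nonlinearity. For uniqueness and continuous dependence, subtract two solutions, test with $\theta_1-\theta_2$, use $\norm{u_1-u_2}\le cRa\norm{\theta_1-\theta_2}$ together with the $L^\infty$ bound on one solution to control $((u_1\cdot\nabla)(\theta_1-\theta_2),\theta_1-\theta_2)$ and the non-symmetric piece $(((u_1-u_2)\cdot\nabla)\theta_2,\theta_1-\theta_2)$, and conclude by Grönwall. Existence of the compact global attractor $\mathcal A\subset{\bf V}\times V$ follows from the absorbing ball in $V$ combined with the compact embedding $V\hookrightarrow H$; finite-dimensionality is obtained by the standard Constantin--Foias--Temam volume-contraction/trace-formula argument applied to the linearization, again using the $L^\infty$ bound to tame the quadratic coupling in 3D. \emph{The main obstacle is precisely the 3D $H^1$ estimate for $\theta$: since $V\not\hookrightarrow\Lp{\infty}$, one cannot close the estimate on the nonlinearity by Sobolev embedding alone, and the $L^\infty$ maximum principle obtained from $\tilde T=\theta+z$ is indispensable for the whole argument.}
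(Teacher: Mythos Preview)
The paper does not prove this theorem; it is quoted verbatim from \cite{Ly_Titi} and stated without proof, so there is no in-paper argument to compare against. Your outline is, in spirit, the correct strategy used in \cite{Ly_Titi}: slave $u$ to $\theta$ via $u=Ra\,P_\sigma(\theta\hat k)$, exploit the maximum principle for the physical temperature to get an $L^\infty$ bound on $\theta$, and use that bound (rather than Sobolev embedding) to close the three-dimensional $H^1$ estimate on $\theta$. You have also correctly identified the central difficulty, namely that $V\not\hookrightarrow L^\infty(\Omega)$ in 3D.

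Two points, however, need repair. First, a sign: with the conventions of \eqref{3D_Benard_Porous_theta} the shifted variable that satisfies the pure advection--diffusion equation is $\tilde T=\theta-z$ (equivalently, the physical temperature up to an additive constant), not $\theta+z$. Indeed $(u\cdot\nabla)(\theta-z)=(u\cdot\nabla)\theta-u\cdot\hat k$, which cancels the source term; your choice $\theta+z$ produces $2u\cdot\hat k$ on the right-hand side and the maximum principle argument collapses. The boundary values then read $\tilde T=0$ on $\{z=0\}$ and $\tilde T=-1$ on $\{z=1\}$, yielding $-1\le\theta-z\le 0$ and hence $\abs{\theta}\le 1$.

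Second, and more substantively, the Stampacchia truncation does \emph{not} work ``at the Galerkin level'' as you claim: the test function $(\tilde T_n-M)_+$ does not belong to the finite-dimensional Galerkin space $\mathrm{span}\{w_1,\dots,w_n\}$, so you cannot insert it into the Galerkin ODE system. Consequently you cannot invoke the $L^\infty$ bound on $\theta_n$ to close the $V$-estimate \emph{before} passing to the limit, which is exactly where your argument needs it. The standard remedy (and what is done in \cite{Ly_Titi} and \cite{Fabrie_Nicolaenko}) is to reorder the steps: first obtain a weak solution from the $L^2$ Galerkin estimates alone, then prove the maximum principle for the weak limit $\theta$ (Stampacchia is legitimate there), and only afterwards bootstrap to the $V$ and $\mathcal D(A)$ regularity using the now-available bound $\norm{\theta(t)}_{L^\infty}\le C$. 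Your $H^1$ estimate and the remaining steps (Aubin--Lions, uniqueness, attractor) are fine once placed in this order.
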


\begin{theorem}\label{gamma_existence}\cite{Fabrie_Nicolaenko}
If $u^0 \in {\bf H}$ and $\theta^0\in \Lp{\infty}$, then for any $T>0$, system \eqref{3D_Benard_Porous}, with $\gamma >0$, has a unique weak solution $(u,\theta)$ that satisfies
\begin{align*}
u \in L^\infty([0,T]; {\bf H}),\quad\text{and} \quad
\theta \in L^\infty([0,T]; \Lp{\infty})\cap L^2([0,T]; V).
\end{align*}
Moreover, for every finite $p$,
\begin{align}
\limsup_{t\rightarrow \infty} \norm{\theta(t)}_{\Lp{p}} \leq \abs{\Omega}^{1/p}. \label{L_p_gamma}
\end{align}
More precisely, $\theta(t;x,y,z)$ can be decomposed such that
\begin{align}\label{L_infty_gamma}
\theta(t;x,y,z)=\theta_1(t;x,y,z) + \theta_2(t;x,y,z),
\end{align}
where $-1\leq \theta_1(t;x,y,z) \leq 1$,  and that there exist two positive constants $\alpha_1$ and $\alpha_2$ depending on $p$ and $\Omega$ such that
$$\norm{\theta_2(t)}_{\Lp{p}} \leq \alpha_1 e^{-\alpha_2 t}.$$
If the initial data $\theta^0$ satisfies $m\leq \theta^0(x,y,z)\leq M$, $a.e.$ in $\Omega$, for some $m\geq -1$ and $M\leq 1$, we have
\begin{align}
m \leq \theta(t;x,y,z) \leq M, \quad a.e. \text{ in } \Omega,
\end{align}
for all $t\geq 0$.

Finally, if $u^0 \in {\bf V}$ and $\theta^0\in \Lp{\infty}$, then there exists a unique strong solution $(u,\theta)$ that satisfies
\begin{align*}
u \in C([0,T]; {\bf V}),\quad \text{and}\quad \od{u}{t} \in L^2([0,T];{\bf H}),\\
\theta \in L^\infty([0,T]; \Lp{\infty})\cap L^2([0,T]; \mathcal{D}(A)),\quad \text{and}\quad \od{\theta}{t} \in L^2([0,T];H),
\end{align*}
and the system admits a compact finite-dimensional exponential attractor $\mathcal{A}\subset{\bf V}\times V$.
\end{theorem}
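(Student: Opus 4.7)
My plan is to construct weak solutions via a Faedo--Galerkin approximation in the eigenbasis $\{w_i\}$ of $A$, projecting Darcy's law \eqref{3D_Benard_Porous_u} through $P_\sigma$ and \eqref{3D_Benard_Porous_theta} onto $\mathrm{span}\{w_1,\dots,w_n\}$, and then passing to the limit via the Aubin--Lions lemma. At the Galerkin level, pairing Darcy's law with $u_n$ gives
\[
\gamma\tfrac{d}{dt}\|u_n\|_{\Lp{2}}^2 + 2\|u_n\|_{\Lp{2}}^2 \le Ra^2\|\theta_n\|_{\Lp{2}}^2,
\]
while pairing \eqref{3D_Benard_Porous_theta} with $\theta_n$ and using \eqref{zero_nonlinearity} yields
\[
\tfrac{d}{dt}\|\theta_n\|_{\Lp{2}}^2 + 2\|A^{1/2}\theta_n\|_{\Lp{2}}^2 \le 2\|u_n\|_{\Lp{2}}\|\theta_n\|_{\Lp{2}}.
\]
A suitable convex combination, together with \eqref{poincare_1}, closes uniform $L^\infty_t L^2_x$ bounds on $(u_n,\theta_n)$ and $L^2_t V$ control of $\theta_n$. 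Uniqueness then follows by subtracting two solutions, testing the temperature difference in $L^2$, and applying Gr\"onwall, with the $L^\infty$ bound on $\theta$ (proved below) controlling the nonlinear term $(\mathcal{B}(u_1,\theta_1)-\mathcal{B}(u_2,\theta_2),\theta_1-\theta_2)$.

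\textbf{Maximum principle and the decomposition.} The structural key is to change variables to the physical temperature $T=\theta+\phi(z)$ with $\phi(z)=\tfrac{T_0}{T_0-T_1}-z$. Since $\Delta\phi=0$ and $(u\cdot\nabla)\phi=-u_3$, equation \eqref{3D_Benard_Porous_theta} becomes the pure advection--diffusion equation
\[
\pd{T}{t}-\Delta T+(u\cdot\nabla)T=0,
\]
with time--independent Dirichlet data $T=\phi(0)$ on $\{z=0\}$, $T=\phi(1)=\phi(0)-1$ on $\{z=1\}$, and homogeneous Neumann data on the side walls. Because $u$ is divergence--free and tangent to $\partial\Omega$, testing against $(T-\phi(0))_+^{2p-1}$ and $(\phi(1)-T)_+^{2p-1}$ kills the transport term, while $(T-\phi(0))_+$ and $(\phi(1)-T)_+$ vanish on $\{z=0\}\cup\{z=1\}$, so Poincar\'e on the active set yields
\[
\|(T-\phi(0))_+\|_{\Lp{2p}}+\|(\phi(1)-T)_+\|_{\Lp{2p}} \le \alpha_1 e^{-\alpha_2 t}.
\]
Setting $\theta_1:=\min(\max(T,\phi(1)),\phi(0))-\phi$ and $\theta_2:=\theta-\theta_1$ gives the stated decomposition \eqref{L_infty_gamma} with $|\theta_1|\le1$ and $\theta_2$ decaying exponentially in every $L^p$; integrating yields \eqref{L_p_gamma}. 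The invariance bound $m\le\theta\le M$ when $-1\le m\le\theta^0\le M\le 1$ is immediate, since in that case $\theta_2\equiv 0$ and one recovers the standard strong maximum principle for $T$.

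\textbf{Strong solutions and attractor.} For $u^0\in{\bf V}$ and $\theta^0\in L^\infty$, I would upgrade regularity by pairing \eqref{3D_Benard_Porous_theta} with $A\theta$ and differentiating Darcy's law in time to pair with $\tfrac{du}{dt}$. The only delicate term is $(\mathcal{B}(u,\theta),A\theta)$; after integration by parts and H\"older, it is dominated by $\|\theta\|_{\Lp{\infty}}\|A^{1/2}u\|_{\Lp{2}}\|A\theta\|_{\Lp{2}}$, so Young's inequality absorbs the $A\theta$ factor into the diffusion using the a priori $L^\infty$ bound on $\theta$ and the $V$--bound on $u$ coming from Darcy's law. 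Closing the estimate produces global strong solutions and an absorbing ball in ${\bf V}\times V$; compactness of $V\hookrightarrow H$ then gives asymptotic compactness and hence a global attractor $\mathcal A\subset {\bf V}\times V$. Finite dimensionality and the existence of an exponential attractor follow from a standard squeezing/discrete--smoothing property verified by splitting $(u,\theta)$ into low-- and high--Galerkin modes and exploiting the dissipation in the temperature equation.

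\textbf{Main obstacle.} The subtlest step is justifying the maximum--principle argument rigorously: the Galerkin approximants $\theta_n$ need not be pointwise bounded, and cutoffs like $(T_n-\phi(0))_+$ do not lie in the Galerkin subspace, so one cannot test the approximate equation with them. My remedy is to pass to the weak $L^2$ limit first, obtaining a solution of the $T$--equation with divergence--free drift $u\in L^2_t{\bf H}$---barely enough to invoke a renormalized--solution argument in the spirit of DiPerna--Lions---and then justify the cutoff test on the limit. Once this structural step is in place, uniqueness, the strong--solution upgrade, and the attractor construction are comparatively standard.
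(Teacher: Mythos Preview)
The paper does not prove this theorem. It is stated in Section~\ref{pre} (Preliminaries) as a result quoted from \cite{Fabrie_Nicolaenko}, with no argument given; the paper only \emph{uses} the $L^\infty$ absorbing bound on $\theta$ in the proofs of Theorems~\ref{th_conv_0} and~\ref{th_conv_gamma}. So there is nothing in the paper to compare your proposal against.

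That said, your sketch is broadly in line with how such results are obtained in \cite{Fabrie_1,Fabrie_2,Fabrie_Nicolaenko}: Galerkin construction for existence, passage to the physical temperature $T=\theta+\phi(z)$ to turn \eqref{3D_Benard_Porous_theta} into a pure advection--diffusion equation and extract the maximum principle and the exponentially decaying ``overflow'' $\theta_2$, and then a higher--order energy estimate for strong solutions feeding into the attractor machinery. Two points deserve more care. First, your justification of the sharp invariance $m\le\theta\le M$ (``in that case $\theta_2\equiv 0$'') is not quite right: your truncation is at the levels $T\in[\phi(1),\phi(0)]$, which corresponds to $\theta\in[z-1,z]$, not to an arbitrary $[m,M]\subset[-1,1]$; the refined bound requires testing the $\theta$--equation itself (or, equivalently, the $T$--equation against a $z$--dependent barrier), and one must check that the forcing term $u\cdot\hat k$ interacts correctly with the cutoff. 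Second, your claimed control $|(\mathcal{B}(u,\theta),A\theta)|\le \|\theta\|_{\Lp{\infty}}\|A^{1/2}u\|_{\Lp{2}}\|A\theta\|_{\Lp{2}}$ does not follow from a single integration by parts; after the standard commutator manipulation one is left with $\int \partial_j u_i\,\partial_i\theta\,\partial_j\theta$, which in three dimensions is bounded via Ladyzhenskaya/Sobolev by $\|A^{1/2}u\|_{\Lp{2}}\|A^{1/2}\theta\|_{\Lp{2}}^{1/2}\|A\theta\|_{\Lp{2}}^{3/2}$. This still closes (Young absorbs the $3/2$ power), so the strategy survives, but the bookkeeping is different from what you wrote. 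Your identification of the ``main obstacle''---that cutoffs are incompatible with the Galerkin projection and the maximum principle must be run on the limit---is exactly the delicate point.
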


\bigskip
\section{Convergence Results}\label{conv}
In this section, we will establish the global existence, uniqueness and stability of solutions of system \eqref{DA_3D_Benard_Porous}, when the observable data satisfy \eqref{app}.


Since we assume that $(u,\theta)$ is a reference solution of system \eqref{3D_Benard_Porous}, then it is enough to show the existence and uniqueness of the difference $(w,\xi)=(u-v, \theta-\eta)$. In the proofs of Theorem \ref{th_conv_0} and Theorem \ref{th_conv_gamma} below, we will drive formal \textit{a-priori} bounds on the difference $(w, \xi)$, under the conditions that $\mu$ is large enough and $h$ is small enough such that $\mu c_0^2h \leq 1$. These \textit{a-priori} estimates, together with the global existence and uniqueness of the solution $(u,\theta)$, form the key elements for showing the global existence of the solution $(v,\eta)$ of system \eqref{DA_3D_Benard_Porous}. The convergence of the approximate solution $(v,\eta)$ to the exact reference solution $(u,\theta)$ will also be established under the same  conditions on the nudging parameter $\mu$ stated in \eqref{mu_to_Ra_0} and \eqref{mu_to_Ra_gamma}, when $\gamma=0$ and $\gamma>0$, respectively. Uniqueness can then be obtained using similar energy estimates.

The estimates we provide in this section are formal, but can be justified by the Galerkin approximation procedure
and then passing to the limit while using the relevant compactness theorems. We
will omit the rigorous details of this standard procedure (see, e.g., \cite{Constantin_Foias_1988, Robinson, Temam_2001_Th_Num, Fabrie_1, Fabrie_Nicolaenko, Ly_Titi}) and provide only the formal \textit{a-priori} estimates.

\subsection{The infinite Darcy-Prandtl number case ($\gamma=0$)} \label{zero}

\begin{theorem}\label{th_conv_0}
Let $I_h$ satisfy the approximation property \eqref{app} and $(u,\theta)$ be a strong solution in the global attractor of \eqref{3D_Benard_Porous} with $\gamma=0$. Suppose that $v^0\in {\bf H}$ and $\eta^0\in H$. Assume  that $\mu>0$ is large enough such that
\begin{align}\label{mu_to_Ra_0}
\mu +\frac{\lambda_1}{2} \geq 2cRa^2 +4Ra,
\end{align}
 and that $h>0$ is small enough such that $\mu c_0^2h^2\leq 1$. Then, there exists a unique weak solution $(v,\eta)$ of system \eqref{DA_3D_Benard_Porous} with $\gamma =0$ such that
\begin{align*}
v \in C([0,T];{\bf H})\cap L^2([0,T]; ({\bf V}),\quad \text{and}\quad \od{v}{t} \in L^2([0,T]; {\bf V}^{'}),\\
\eta \in C([0,T];H)\cap L^2([0,T];V),\quad \text{and}\quad \od{\eta}{t} \in L^2([0,T];V^{'}).
\end{align*}

Moreover, $\norm{u(t)-v(t)}_{\Lp{2}}^2 \rightarrow 0$, and $\norm{\theta(t)-\eta(t)}_{\Lp{2}}^2$ $\rightarrow 0$, at an
exponential rate, as $t \rightarrow \infty$.
\end{theorem}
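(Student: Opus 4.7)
The plan is to reduce everything to the difference variables $w = u - v$ and $\xi = \theta - \eta$. Since $\gamma = 0$, subtracting \eqref{3D_Benard_Porous_u} from \eqref{DA_3D_Benard_Porous_v} yields $w + \nabla(p-q) = Ra\,\xi\,\hat{k}$, together with $\nabla\cdot w = 0$ and $w\cdot\hat{n} = 0$. Thus $w$ is diagnostically determined by $\xi$ via the Helmholtz--Leray projection, $w = Ra\,P_\sigma(\xi\,\hat{k})$, and in particular $\|w\|_{\Lp{2}} \le cRa\,\|\xi\|_{\Lp{2}}$. This algebraic slaving of $w$ to $\xi$ is the structural reason why only temperature measurements can close the algorithm in the infinite Darcy--Prandtl regime, and it will be the main driver of the whole argument.

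Next I subtract the two temperature equations, using the splitting $(u\cdot\nabla)\theta - (v\cdot\nabla)\eta = (v\cdot\nabla)\xi + (w\cdot\nabla)\theta$, to obtain
\begin{equation*}
\partial_t \xi - \Delta \xi + (v\cdot\nabla)\xi + (w\cdot\nabla)\theta - w\cdot\hat{k} = -\mu\, I_h(\xi).
\end{equation*}
Pairing in $\Lp{2}$ with $\xi$, the transport term drops by \eqref{zero_nonlinearity}. For the feedback term I write $-\mu(I_h(\xi),\xi) = -\mu\|\xi\|_{\Lp{2}}^2 + \mu(\xi - I_h(\xi),\xi)$, bound the correction with \eqref{app_F} and Young, and use $\mu c_0^2 h^2 \le 1$ to absorb it: $-\mu(I_h(\xi),\xi) \le -\tfrac{\mu}{2}\|\xi\|_{\Lp{2}}^2 + \tfrac{1}{2}\|A^{1/2}\xi\|_{\Lp{2}}^2$. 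The buoyancy coupling is crudely estimated by $|(w\cdot\hat{k},\xi)| \le cRa\,\|\xi\|_{\Lp{2}}^2$. The critical term is the trilinear one, which I integrate by parts (permissible since $\nabla\cdot w=0$ and $w\cdot\hat{n}=0$) to move the derivative off $\theta$, $((w\cdot\nabla)\theta,\xi) = -((w\cdot\nabla)\xi,\theta)$, and then invoke the $\Lp{\infty}$ bound $\|\theta\|_{\Lp{\infty}} \le 1$ valid on the global attractor by \eqref{L_infty_0}, getting
\begin{equation*}
|((w\cdot\nabla)\theta,\xi)| \le cRa\,\|\xi\|_{\Lp{2}} \|A^{1/2}\xi\|_{\Lp{2}} \le \tfrac{1}{4}\|A^{1/2}\xi\|_{\Lp{2}}^2 + cRa^{2}\|\xi\|_{\Lp{2}}^2.
\end{equation*}

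Assembling, absorbing the $\|A^{1/2}\xi\|_{\Lp{2}}^2$ terms into dissipation, and invoking Poincar\'e \eqref{poincare_1} for the remainder, I arrive at a differential inequality of the form
\begin{equation*}
\tfrac{d}{dt}\|\xi\|_{\Lp{2}}^2 + \bigl(\mu + \tfrac{\lambda_1}{2} - 2cRa^2 - 4Ra\bigr)\|\xi\|_{\Lp{2}}^2 \le 0,
\end{equation*}
and under the assumption \eqref{mu_to_Ra_0} Gr\"onwall then delivers exponential decay of $\|\xi(t)\|_{\Lp{2}}^2$. The Darcy relation $\|w\|_{\Lp{2}} \le cRa\,\|\xi\|_{\Lp{2}}$ transfers this decay instantly to $\|u-v\|_{\Lp{2}}^2$.

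To make these formal bounds rigorous I would run a standard Galerkin approximation scheme, as in \cite{Constantin_Foias_1988, Robinson, Temam_2001_Th_Num, Ly_Titi}: the a priori estimates above together with the regularity of the reference solution from Theorem \ref{unif_bounds_0} yield $v\in L^\infty([0,T];\mathbf{H})\cap L^2([0,T];\mathbf{V})$ and $\eta\in L^\infty([0,T];H)\cap L^2([0,T];V)$ with the required bounds on the time derivatives, so Aubin--Lions compactness lets one pass to the limit in the nonlinearities. Uniqueness follows by running the same energy estimate on the difference of two solutions of \eqref{DA_3D_Benard_Porous}. The hard point in the argument is closing the buoyancy coupling using only temperature information; this is exactly where the $\gamma=0$ hypothesis and the attractor bound $\|\theta\|_{\Lp{\infty}}\le 1$ are indispensable, since they allow both $w\cdot\hat{k}$ and $(w\cdot\nabla)\theta$ to be estimated solely by $\xi$ without requiring any separate nudging on the velocity.
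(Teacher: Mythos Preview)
Your proof is correct and follows essentially the same route as the paper: reduce to the difference variables, use the diagnostic Darcy relation $w = Ra\,P_\sigma(\xi\hat{k})$ to slave $w$ to $\xi$, estimate the trilinear term by integrating by parts and invoking the $L^\infty$ bound on $\theta$, handle the nudging term via \eqref{app_F} and the condition $\mu c_0^2 h^2 \le 1$, and close with Poincar\'e and Gronwall. The only cosmetic difference is that you invoke $\|\theta\|_{L^\infty}\le 1$ directly on the attractor, whereas the paper uses the limsup statement \eqref{L_infty_0} with an $\varepsilon$-argument to get $\|\theta(t)\|_{L^\infty}\le 2$ for $t\ge t_0$; both are legitimate and lead to the same conclusion up to the generic constant $c$.
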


\begin{proof}
Define $w:=u-v$ and $\xi:=\theta-\eta$. Then, in functional settings, $w$ and $\xi$ will satisfy the equations:
\begin{subequations}\label{difference_0}
\begin{align}
w &= RaP_\sigma(\xi \hat{k}), \label{DA_3D_Benard_Porous_w_0}\\
\pd{\xi}{t} +  A\xi + \mathcal{B}(v,\xi) + \mathcal{B}(w, \theta) -w\cdot \hat{k} &=-\mu I_h(\xi).\label{DA_3D_Benard_Porous_xi_0}
\end{align}
\end{subequations}
Taking the $H$-inner product of \eqref{DA_3D_Benard_Porous_xi_0} with $\xi$, we obtain
\begin{align*}
\frac{1}{2} \od{}{t}\norm{\xi}_{\Lp{2}}^2 +  \norm{A^{1/2} \xi}_{\Lp{2}}^2 &+(\mathcal{B}(v,\xi),\xi) + (\mathcal{B}(w,\theta),\xi) \notag \\
& = (w\cdot \hat{k}, \xi) - \mu (I_h(\xi), \xi).
\end{align*}

Thanks to \eqref{zero_nonlinearity}, we have
\begin{align}\label{1_0}
(\mathcal{B}(v,\xi),\xi) = 0.
\end{align}
We also notice from \eqref{DA_3D_Benard_Porous_w_0} that
\begin{align*}
w= Ra P_\sigma(\xi\hat{k}), \quad \text { in } L^2([0,T]; {\bf H}),
\end{align*}
for any $T>0$. This implies that
\begin{align}\label{2_0}
(w\cdot \hat{k},\xi) = Ra(P_\sigma(\xi\hat{k}), \xi) \leq Ra\norm{P_\sigma(\xi\hat{k})}_{\Lp{2}}\norm{\xi}_{\Lp{2}}\leq Ra\norm{\xi}_{\Lp{2}}^2.
\end{align}

Using and H\"older's inequality, we get
\begin{align*}
\abs{(\mathcal{B}(w,\theta),\xi)} & = \abs{(\mathcal{B}(w,\xi),\theta)}\notag\\ 
& \leq \norm{w}_{\Lp{2}} \norm{\theta}_{\Lp{\infty}} \norm{A^{1/2}\xi}_{\Lp{2}}\notag \\
& \leq \frac{1}{8} \norm{A^{1/2}\xi}_{\Lp{2}}^2 + c\norm{\theta}_{\Lp{\infty}}^2 \norm{w}_{\Lp{2}}^2.
\end{align*}
Thanks to the equation \eqref{DA_3D_Benard_Porous_w_0} we have
\begin{align}\label{3_0}
\abs{(\mathcal{B}(w,\theta),\xi)} &\leq \frac{1}{8} \norm{A^{1/2}\xi}_{\Lp{2}}^2 + cRa^2\norm{\theta}_{\Lp{\infty}}^2 \norm{\xi}_{\Lp{2}}^2.
\end{align}
The approximation inequality \eqref{app_F} and Young's inequality imply
\begin{align}\label{4_0}
-\mu (I_h(\xi),\xi) &= -\mu (I_h(\xi)-\xi, \xi) - \mu \norm{\xi}_{\Lp{2}}^2 \notag\\
&\leq \mu \norm{I_h(\xi)-\xi}_{\Lp{2}}\norm{\xi}_{\Lp{2}} - \mu \norm{\xi}_{\Lp{2}}^2 \notag \\
& \leq \frac{\mu c_0^2h^2}{2} \norm{A^{1/2}\xi}_{\Lp{2}}^2 - \frac{\mu}{2}\norm{\xi}_{\Lp{2}}^2.
\end{align}

Using assumption $\mu c_0^2h^2 \leq 1$ and estimates \eqref{1_0}--\eqref{4_0}, we conclude that
\begin{align*}
\od{}{t} \norm{\xi}_{\Lp{2}}^2 + \frac{1}{2}\norm{A^{1/2}\xi}_{\Lp{2}}^2 \leq \left(cRa^2\norm{\theta}_{\Lp{\infty}}^2 +2 Ra -\mu\right) \norm{\xi}_{\Lp{2}}^2.
\end{align*}
The Poincar\'e inequality \eqref{poincare_1} yields
\begin{align*}
\od{}{t} \norm{\xi}_{\Lp{2}}^2 + \frac{\lambda_1}{2}\norm{\xi}_{\Lp{2}}^2 \leq \left(cRa^2\norm{\theta}_{\Lp{\infty}}^2 +2 Ra -\mu\right) \norm{\xi}_{\Lp{2}}^2.
\end{align*}
We define $$\alpha(t):= \mu +\frac{\lambda_1}{2}- 2 Ra - cRa^2\norm{\theta}_{\Lp{\infty}}^2.$$
Then,
\begin{align}
\od{}{t}\norm{\xi}_{\Lp{2}}^2 + \alpha(t)\norm{\xi}_{\Lp{2}}^2 \leq 0.
\end{align}

The uniform bound \eqref{L_infty_0} implies that: for any fixed $\veps>0$, there exists a time $t_0(\veps)>0$ such that
\begin{align*}
\norm{\theta(t)}_{\Lp{\infty}}\leq 1+\veps,
\end{align*}
for all $t\geq t_0$. Taking $\veps =1$, then there exists a time $t_0>0$ such that $\norm{\theta(t)}_{\Lp{\infty}} \leq 2$, for all $t\geq t_0$. Then, we have
\begin{align*}
\alpha(t) & \geq \mu + \frac{\lambda_1}{2}- 2 Ra - cRa^2,
\end{align*}
and
\begin{align*}
\alpha(t)& \leq \mu + \frac{\lambda_1}{2}+ 2 Ra + cRa^2<\infty,
\end{align*}
for all $t\geq t_0$. Now, the assumption \eqref{mu_to_Ra_0} implies that
\begin{align}
\alpha(t)\geq  2 Ra +cRa^2 >0,
\end{align}
for all $t\geq t_0$.

By Gronwall's Lemma, it follows that
\begin{align*}
\norm{\theta(t)-\eta(t)}_{\Lp{2}}^2  = \norm{\xi(t)}_{\Lp{2}}^2 \rightarrow 0,
\end{align*}
at an exponential rate, as $t\rightarrow \infty$. Finally, the equation \eqref{DA_3D_Benard_Porous_w_0} yields that
\begin{align*}
\norm{w(t)}_{\Lp{2}}^2 \leq Ra \norm{\xi(t)}_{\Lp{2}}^2,
\end{align*}
thus,
\begin{align*}
\norm{u(t)-v(t)}_{\Lp{2}}^2= \norm{w}_{\Lp{2}}^2 \rightarrow 0,
\end{align*}
at an exponential rate, as $t\rightarrow \infty$.

\end{proof}

\bigskip
\subsection{The finite Darcy-Prandtl number case ($\gamma>0$)} \label{nonzero}
\begin{theorem}\label{th_conv_gamma}

Fix $\gamma>0$, and $(u,\theta)$ be a strong solution in the global attractor of \eqref{3D_Benard_Porous}. Suppose that $v^0\in {\bf H}$, $\eta^0\in H$ and  that $I_h$ satisfies the approximation property \eqref{app}. Let $\mu>0$ be large enough such that
\begin{align}\label{mu_to_Ra_gamma}
2 \mu + \lambda_1 \geq  2\frac{cRa^4}{\gamma} +2c\gamma(1+\lambda_1^{-1})^2,
\end{align}
and $h>0$ is small enough such that $\mu c_0^2h^2\leq 1 $. Then, there exists a unique weak solution $(v,\eta)$ of system \eqref{DA_3D_Benard_Porous} corresponding to the same $\gamma>0$ such that
\begin{align*}
v \in C([0,T]; {\bf H}),\quad \text{and}\quad \od{v}{t} \in L^2([0,T];{\bf V}^{'}) \\
\eta \in C([0,T];H)\cap L^2([0,T];V),\quad \text{and}\quad \od{\eta}{t} \in L^2([0,T];V^{'}).
\end{align*}

Moreover, $\norm{u(t)-v(t)}_{\Lp{2}}^4 + \norm{\theta(t)-\eta(t)}_{\Lp{2}}^4$ $\rightarrow 0$, at an exponential rate, as $t \rightarrow \infty$.
\end{theorem}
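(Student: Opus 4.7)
The plan is to set up a coupled energy estimate for the difference $(w,\xi) = (u-v,\theta-\eta)$ and bundle the two estimates into a single Lyapunov functional that decays exponentially. Subtracting \eqref{DA_3D_Benard_Porous} from \eqref{3D_Benard_Porous}, using $(u\cdot\nabla)\theta - (v\cdot\nabla)\eta = (v\cdot\nabla)\xi + (w\cdot\nabla)\theta$, and applying $P_\sigma$ to the first equation, gives the functional form
\begin{align*}
\gamma \od{w}{t} + w &= Ra\, P_\sigma(\xi\hat{k}),\\
\od{\xi}{t} + A\xi + \mathcal{B}(v,\xi) + \mathcal{B}(w,\theta) - w\cdot\hat{k} &= -\mu I_h(\xi).
\end{align*}
The fundamental difference with the $\gamma = 0$ case is that $w$ is now governed by its own evolution, so the pointwise bound $\|w\|_{\Lp{2}} \leq Ra\,\|\xi\|_{\Lp{2}}$ used in Theorem~\ref{th_conv_0} is unavailable and $w$ must be controlled dynamically.

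First I would take the $H$-inner product of the $\xi$-equation with $\xi$. The term $(\mathcal{B}(v,\xi),\xi)$ vanishes by \eqref{zero_nonlinearity}; the cross-convective term is handled via integration by parts $(\mathcal{B}(w,\theta),\xi) = -(\mathcal{B}(w,\xi),\theta)$ together with H\"older, yielding $|(\mathcal{B}(w,\theta),\xi)| \leq \norm{\theta}_{\Lp{\infty}}\norm{w}_{\Lp{2}}\norm{A^{1/2}\xi}_{\Lp{2}}$, which is controlled since by Theorem~\ref{gamma_existence} the decomposition $\theta = \theta_1 + \theta_2$ guarantees $\norm{\theta(t)}_{\Lp{\infty}} \leq 2$ for all $t\geq t_0$ large enough. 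The nudging term is handled exactly as in Theorem~\ref{th_conv_0}: expand $-2\mu(I_h\xi,\xi) = -2\mu\norm{\xi}_{\Lp{2}}^2 + 2\mu(\xi - I_h\xi,\xi)$, use \eqref{app_F} and the hypothesis $\mu c_0^2h^2 \leq 1$ to absorb part of the dissipation $\norm{A^{1/2}\xi}_{\Lp{2}}^2$. The coupling term $2(w\cdot\hat{k},\xi)$ is split by Young's inequality with a weight of order $\gamma$, producing a $\gamma(1+\lambda_1^{-1})$-type contribution to the $\norm{\xi}_{\Lp{2}}^2$ coefficient after invoking Poincar\'e's inequality \eqref{poincare_1}.

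Second I would take the ${\bf H}$-inner product of the $w$-equation with $w$. Since Darcy's law carries no nonlinearity, this is a clean computation that leads to
$$\gamma\od{}{t}\norm{w}_{\Lp{2}}^2 + \norm{w}_{\Lp{2}}^2 \leq Ra^2\,\norm{\xi}_{\Lp{2}}^2$$
after one Young splitting of $2Ra(P_\sigma(\xi\hat{k}),w)$. I would then form the Lyapunov functional $\Phi(t) := \norm{\xi(t)}_{\Lp{2}}^2 + \beta\,\norm{w(t)}_{\Lp{2}}^2$ with $\beta$ of order $Ra^2/\gamma$, chosen so that the $\norm{w}_{\Lp{2}}^2$ source term appearing on the right of the $\xi$-estimate (coming from $\mathcal{B}(w,\theta)$ and $(w\cdot\hat{k},\xi)$) is absorbed by the $\beta\gamma^{-1}\norm{w}_{\Lp{2}}^2$ damping produced by the weighted $w$-estimate. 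The $\norm{\xi}_{\Lp{2}}^2$ source term $\beta\,Ra^2\gamma^{-1}\norm{\xi}_{\Lp{2}}^2 \sim Ra^4/\gamma\cdot\norm{\xi}_{\Lp{2}}^2$ fed back from the $w$-side is then absorbed by the $\mu\norm{\xi}_{\Lp{2}}^2$ damping together with the $\tfrac{\lambda_1}{2}\norm{\xi}_{\Lp{2}}^2$ coming from Poincar\'e, which is exactly the content of the assumption \eqref{mu_to_Ra_gamma}. This yields $\od{\Phi}{t} + \alpha\,\Phi \leq 0$ on $[t_0,\infty)$ for some $\alpha > 0$, and Gronwall gives exponential decay of $\Phi$. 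Since $\norm{w}_{\Lp{2}}^4 + \norm{\xi}_{\Lp{2}}^4 \leq C\,\Phi(t)^2$, the stated fourth-power conclusion follows immediately.

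The main obstacle I anticipate is tuning the numerical constants so that the two Young splittings, the Poincar\'e conversion of $\tfrac{1}{2}\norm{A^{1/2}\xi}_{\Lp{2}}^2$ into $\tfrac{\lambda_1}{2}\norm{\xi}_{\Lp{2}}^2$, and the choice of $\beta \sim Ra^2/\gamma$ all interlock to produce exactly the hypothesis \eqref{mu_to_Ra_gamma} with its $Ra^4/\gamma$ and $\gamma(1+\lambda_1^{-1})^2$ contributions; in particular the $\gamma(1+\lambda_1^{-1})^2$ term arises from the $\gamma$-weighted splitting of the $(w\cdot\hat{k},\xi)$ coupling combined with Poincar\'e. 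Once the \textit{a priori} bound on $(w,\xi)$ is in place, global existence and uniqueness of $(v,\eta)$ in the stated regularity classes are obtained by the standard Galerkin procedure (writing $v = u-w$, $\eta = \theta - \xi$ and using the regularity of $(u,\theta)$ from Theorem~\ref{gamma_existence}), exactly as referenced in \cite{Fabrie_1, Fabrie_Nicolaenko, Ly_Titi}.
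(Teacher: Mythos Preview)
Your overall strategy is sound: derive the two coupled $L^2$ energy inequalities for $w$ and $\xi$, then combine them into a single decaying functional. The paper, however, combines them differently. After obtaining
\begin{align*}
\od{}{t}\norm{w}_{\Lp{2}}^2 + \frac{1}{\gamma}\norm{w}_{\Lp{2}}^2 &\leq \frac{Ra^2}{\gamma}\norm{\xi}_{\Lp{2}}^2,\\
\od{}{t}\norm{\xi}_{\Lp{2}}^2 + \Big(\mu+\frac{\lambda_1}{2}\Big)\norm{\xi}_{\Lp{2}}^2 &\leq c\big(\lambda_1^{-1}+\norm{\theta}_{\Lp{\infty}}^2\big)\norm{w}_{\Lp{2}}^2,
\end{align*}
the paper multiplies the first by $\norm{w}_{\Lp{2}}^2$ and the second by $\norm{\xi}_{\Lp{2}}^2$, adds, and applies Young's inequality to the resulting cross terms $\norm{w}_{\Lp{2}}^2\norm{\xi}_{\Lp{2}}^2$ with weights $\tfrac{1}{4\gamma}$ on the $\norm{w}_{\Lp{2}}^4$ side. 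This is precisely what generates the two contributions $cRa^4/\gamma$ and $c\gamma(\lambda_1^{-1}+\norm{\theta}_{\Lp{\infty}}^2)^2$ in the $\norm{\xi}_{\Lp{2}}^4$ coefficient, and hence the exact form of hypothesis \eqref{mu_to_Ra_gamma}. Your linear Lyapunov functional $\Phi=\norm{\xi}_{\Lp{2}}^2+\beta\norm{w}_{\Lp{2}}^2$ works, but it does \emph{not} reproduce \eqref{mu_to_Ra_gamma}; it yields instead a condition of the form $2\mu+\lambda_1>c(1+\lambda_1^{-1})Ra^2$, which is in fact weaker.

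Two concrete corrections to your sketch. First, your proposed weight $\beta\sim Ra^2/\gamma$ is not the right choice: to absorb the $C_\theta\norm{w}_{\Lp{2}}^2$ source (with $C_\theta=c(\lambda_1^{-1}+\norm{\theta}_{\Lp{\infty}}^2)$) into the $\beta\gamma^{-1}\norm{w}_{\Lp{2}}^2$ damping you need $\beta>\gamma C_\theta$, and with $\beta\sim Ra^2/\gamma$ this becomes an extraneous constraint $Ra^2>\gamma^2 C_\theta$ that need not hold. The natural choice is $\beta\sim\gamma C_\theta$, which then feeds back $\beta Ra^2/\gamma\sim C_\theta Ra^2$ (not $Ra^4/\gamma$) into the $\norm{\xi}_{\Lp{2}}^2$ balance. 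Second, the term $\gamma(1+\lambda_1^{-1})^2$ in \eqref{mu_to_Ra_gamma} does not come from a $\gamma$-weighted splitting of $(w\cdot\hat{k},\xi)$ inside the $\xi$-estimate as you suggest; in the paper both $(w\cdot\hat{k},\xi)$ and $(\mathcal{B}(w,\theta),\xi)$ are pushed onto $\norm{w}_{\Lp{2}}^2$ (via $\tfrac18\norm{A^{1/2}\xi}_{\Lp{2}}^2+c\lambda_1^{-1}\norm{w}_{\Lp{2}}^2$ and $\tfrac18\norm{A^{1/2}\xi}_{\Lp{2}}^2+c\norm{\theta}_{\Lp{\infty}}^2\norm{w}_{\Lp{2}}^2$), and the squared factor and the $\gamma$ appear only later, at the fourth-power stage. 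So your approach is a legitimate and arguably cleaner alternative, but your bookkeeping of where the specific constants in \eqref{mu_to_Ra_gamma} come from is off, and you should not expect to recover that exact hypothesis with the linear functional.
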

\begin{proof}
Define $w:=u-v$ and $\xi:=\theta-\eta$. Then, in functional settings, $w$ and $\xi$ will satisfy the equations:
\begin{subequations}\label{difference_gamma}
\begin{align}
\gamma \od{w}{t} + w &= Ra P_\sigma(\xi \hat{k}), \label{DA_3D_Benard_Porous_w_gamma}\\
\od{\xi}{t} + A\xi + \mathcal{B}(v,\xi) + \mathcal{B}(w, \theta)- w\cdot \hat{k} &=-\mu I_h(\xi). \label{DA_3D_Benard_Porous_xi_gamma}
\end{align}
\end{subequations}

Taking the ${\bf H}$-inner product of \eqref{DA_3D_Benard_Porous_w_gamma} with $w$ and the $H$-inner product of\eqref{DA_3D_Benard_Porous_xi_gamma} with $\xi$, respectively, we obtain
\begin{align*}
\frac{\gamma}{2}\od{}{t}\norm{w}_{\Lp{2}}^2 + \norm{w}_{\Lp{2}}^2 &= Ra (\xi, w\cdot\hat{k}),\notag\\
\frac{1}{2} \od{}{t}\norm{\xi}_{\Lp{2}}^2 + \norm{A^{1/2} \xi}_{\Lp{2}}^2 &+(\mathcal{B}(v,\xi),\xi) + (\mathcal{B}(w,\theta),\xi) \notag \\
& = (w\cdot \hat{k}, \xi) - \mu (I_h(\xi), \xi).
\end{align*}
Thanks to \eqref{zero_nonlinearity}, we have
\begin{align}\label{1_gamma}
(\mathcal{B}(v,\xi),\xi) = 0.
\end{align}
Also, estimate \eqref{4_0} shows that
\begin{align}\label{2_gamma}
-\mu (I_h(\xi),\xi)
& \leq \frac{\mu c_0^2h^2}{2} \norm{A^{1/2}\xi}_{\Lp{2}}^2 - \frac{\mu}{2}\norm{\xi}_{\Lp{2}}^2.
\end{align}
By H\"older's inequality, we have
\begin{align}\label{3_gamma}
\abs{(\mathcal{B}(w,\theta),\xi)} & = \abs{(\mathcal{B}(w,\xi),\theta)}\notag\\ 
& \leq \norm{w}_{\Lp{2}} \norm{\theta}_{\Lp{\infty}} \norm{A^{1/2}\xi}_{\Lp{2}}\notag \\
& \leq \frac{1}{8} \norm{A^{1/2}\xi}_{\Lp{2}}^2 + c\norm{\theta}_{\Lp{\infty}}^2 \norm{w}_{\Lp{2}}^2.
\end{align}

The above estimates \eqref{1_gamma}--\eqref{3_gamma}, the Cauchy-Schwarz inequality, and the Poincar\'e inequality \eqref{poincare_1} yield
\begin{align*}
\frac{\gamma}{2}\od{}{t}\norm{w}_{\Lp{2}}^2 + \norm{w}_{\Lp{2}}^2 &= Ra (\xi, w\cdot\hat{k})\notag\\
& \leq \frac{1}{2} \norm{w}_{\Lp{2}}^2 + \frac{Ra^2}{2} \norm{\xi}_{\Lp{2}}^2\notag \\
\frac{1}{2} \od{}{t}\norm{\xi}_{\Lp{2}}^2 + \norm{A^{1/2} \xi}_{\Lp{2}}^2 +\frac\mu2 \norm{\xi}_{\Lp{2}}^2 &\leq \frac{\mu c_0^2h^2}{2}\norm{A^{1/2}\xi}_{\Lp{2}}^2 + \frac{1}{4}\norm{A^{1/2}\xi}_{\Lp{2}}^2\notag \\ &\quad +c \norm{\theta}_{\Lp{\infty}}^2\norm{w}_{\Lp{2}}^2  + c\lambda_1^{-1}\norm{w}_{\Lp{2}}^2.
\end{align*}
Using the assumption $\mu c_0^2 h^2 \leq1$ imply
\begin{subequations}
\begin{align}
\od{}{t}\norm{w}_{\Lp{2}}^2 + \frac{1}{\gamma} \norm{w}_{\Lp{2}}^2 &\leq \frac{Ra^2}{\gamma} \norm{\xi}_{\Lp{2}}^2, \label{L2_w}\\
\od{}{t}\norm{\xi}_{\Lp{2}}^2 + \left(\mu+\frac{\lambda_1}{2}\right)\norm{\xi}_{\Lp{2}}^2 &\leq c\left(\lambda_1^{-1}+\norm{\theta}_{\Lp{\infty}}^2\right)\norm{w}_{\Lp{2}}^2. \label{L2_xi}
\end{align}
\end{subequations}

Multiplying \eqref{L2_w} with $\norm{w}_{\Lp{2}}^2$ and \eqref{L2_xi} with $\norm{\xi}_{\Lp{2}}^2$, respectively, we have
\begin{align*}
\frac{1}{2} \od{}{t} \norm{w}_{\Lp{2}}^4 + \frac{1}{\gamma} \norm{w}_{\Lp{2}}^4 &\leq \frac{Ra^2}{\gamma} \norm{w}_{\Lp{2}}^2 \norm{\xi}_{\Lp{2}}^2,\notag\\
\frac{1}{2} \od{}{t} \norm{\xi}_{\Lp{2}}^4 + \left(\mu+\frac{\lambda_1}{2}\right)\norm{\xi}_{\Lp{2}}^4 &\leq c\left(\lambda_1^{-1}+\norm{\theta}_{\Lp{\infty}}^2\right)\norm{w}_{\Lp{2}}^2\norm{\xi}_{\Lp{2}}^2. 
\end{align*}
Using the Cauchy-Schwarz inequality, we get
\begin{align*}
&\frac{1}{2} \od{}{t}\left(\norm{w}_{\Lp{2}}^4 + \norm{\xi}_{\Lp{2}}^4\right) \notag \\
&\qquad \qquad  \leq -\frac{1}{\gamma} \norm{w}_{\Lp{2}}^4 - \left(\mu+\frac{\lambda_1}{2}\right)\norm{\xi}_{\Lp{2}}^4 + \frac{Ra^2}{\gamma} \norm{w}_{\Lp{2}}^2\norm{\xi}_{\Lp{2}}^2 \notag \\ &\qquad \qquad \quad+  c\left(\lambda_1^{-1}+\norm{\theta}_{\Lp{\infty}}^2\right)\norm{w}_{\Lp{2}}^2\norm{\xi}_{\Lp{2}}^2 \notag \\
&\qquad \qquad \leq -\frac{1}{\gamma}\norm{w}_{\Lp{2}}^4 - \left(\mu+\frac{\lambda_1}{2}\right)\norm{\xi}_{\Lp{2}}^4 + \frac{1}{4\gamma}\norm{w}_{\Lp{2}}^4 + \frac{cRa^4}{\gamma}\norm{\xi}_{\Lp{2}}^4 \notag \\ & \qquad \qquad \quad+ \frac{1}{4\gamma}\norm{w}_{\Lp{2}}^4 + c \gamma\left(\lambda_1^{-1}+\norm{\theta}_{\Lp{\infty}}^2\right)^2\norm{\xi}_{\Lp{2}}^4\notag\\
& \qquad \qquad= -\frac{1}{2\gamma} \norm{w}_{\Lp{2}}^4 - \left(\mu +\frac{\lambda_1}{2}- \frac{cRa^4}{\gamma} - c \gamma\left(\lambda_1^{-1}+\norm{\theta}_{\Lp{\infty}}^2\right)^2\right)\norm{\xi}_{\Lp{2}}^4. 
\end{align*}
The above inequality can be rewritten as
\begin{align}
\od{}{t}\left(\norm{w}_{\Lp{2}}^4 + \norm{\xi}_{\Lp{2}}^4\right) + \alpha(t)\left(\norm{w}_{\Lp{2}}^4 + \norm{\xi}_{\Lp{2}}^4\right)\leq 0,
\end{align}
where $$\alpha(t) = \min\bigg\{\frac{1}{\gamma},\, 2\mu + \lambda_1-\frac{cRa^4}{\gamma}-c \gamma\left(\lambda_1^{-1}+\norm{\theta(t)}_{\Lp{\infty}}^2\right)^2\bigg\}.$$

Recall that, by Theorem \eqref{gamma_existence}, the solution $\theta(t;x,y,z)$ of the 3D B\'enard problem in porous media satisfies the Maximum Principle and 
$$\limsup_{t\rightarrow \infty} \norm{\theta(t)}_{\Lp{\infty}} \leq 1. $$
Then, by a similar argument as in the proof of the previous theorem, we conclude that there exists a time $t_0>0$ such that
\begin{align*}
\alpha(t)& \geq  \min\bigg\{\frac{1}{\gamma}, \, 2\mu+\lambda_1 -\frac{cRa^4}{\gamma}-c\gamma(1+\lambda_1^{-1})^2\bigg\},
\end{align*}
and
\begin{align*}
\alpha(t) \leq \min\bigg\{\frac{1}{\gamma}, \, 2\mu + \lambda_1+ \frac{cRa^4}{\gamma} +c\gamma(1+\lambda_1^{-1})^2\bigg\}<\infty,
\end{align*}
for all $t\geq t_0$. Now,  assumption \eqref{mu_to_Ra_gamma} implies that
\begin{align}
\alpha(t) \geq \min\bigg\{\frac{1}{\gamma}, \, \frac{cRa^4}{\gamma}+c\gamma(1+\lambda_1^{-1})^2\bigg\} >0,
\end{align}
for all $t\geq t_0$.

By Gronwall's Lemma , it follows that
\begin{align*}
\norm{u(t)-v(t)}_{\Lp{2}}^4 + \norm{\theta(t)-\eta(t)}_{\Lp{2}}^4 \rightarrow 0,
\end{align*}
at an exponential rate, as $t\rightarrow \infty$.
\end{proof}

\begin{remark}
 Suppose that initial data $(v^0,\eta^0)$ is contained in double the size of the absorbing ball (for example, one $(v^0,\eta^0)=(0,0)$). Consider the   approximate solutions of system \eqref{DA_3D_Benard_Porous} with interpolant operators $I_h$ satisfying either of the approximate properties \eqref{app} or \eqref{app2}. Then one can find  $\mu>0$ large enough and $h>0$  small enough, depending on the physical parameters of the underlying system, such that $v(t)$ converges in the strong sense to the reference solutions of system \eqref{3D_Benard_Porous}, i.e. $\norm{u(t)-v(t)}_{\bf V} + \norm{\theta(t)-\eta(t)}_{V} \rightarrow 0$, at an exponential rate, as $t\rightarrow \infty$.  The proof of this statement follows similar adjusted arguments and ideas to those presented  in \cite{FJT} and \cite{FLT2015}.
\end{remark}
\bigskip

\section*{Acknowledgements}
E.S.T. is thankful to the kind hospitality of the \'Ecole Polytechnique (CMLS), Paris, where part of this work was completed. The work of A.F. is supported in part by NSF grant  DMS-1418911. The work of E.L. is supported  by the ONR grant N0001415WX01725. The work of  E.S.T.  is supported in part by the ONR grant N00014-15-1-2333 and the NSF grants DMS-1109640 and DMS-1109645.
\bigskip

\end{document}